\newtheorem{corollary}{Corollary}
\newtheorem{proposition}{Proposition}
\newtheorem{remark}{Remark}
\newtheorem{example}{Exemple}
\newcommand{\diag}{{\rm diag}}
\newcommand{\relu}{{\rm ReLU }}
\newcommand{\esup}{{\rm ess sup}}
\newcommand{\RR}{\mathbb{R}}
\newcommand{\Ccal}{\mathcal{C}}
\newcommand{\xcal}{\mathcal{X}}
\newcommand{\so}[1]{{\color{magenta}#1}}
\title{MIQCQP reformulation of the ReLU neural networks Lipschitz constant estimation problem}
\author[1]{Mohammed Sbihi\thanks{mohammed.sbihi@recherche.enac.fr}}
\author[2]{Sophie Jan\thanks{sophie.jan@math.univ-toulouse.fr}}
\author[1,2]{Nicolas Couellan\thanks{nicolas.couellan@recherche.enac.fr}}
\affil[1]{Fédération ENAC ISAE-SUPAERO ONERA, Université de Toulouse, France}
\affil[2]{Institut de Math\'{e}matiques de Toulouse UMR 5219, Universit\'{e} de Toulouse; CNRS, UPS, F-31062 Toulouse Cedex 9, France.}
\date{}
\begin{document}

\maketitle
\begin{abstract}
It is well established that to ensure or certify the robustness of a neural network, its Lipschitz constant plays a prominent role. However, its calculation is NP-hard.
In this note, by taking into account activation regions at each layer as new constraints, we propose new quadratically constrained MIP formulations for the neural network Lipschitz estimation problem. The solutions of these problems give lower bounds and upper bounds of the Lipschitz constant and we detail conditions when they coincide with the exact Lipschitz constant. 
\end{abstract}

\section{Introduction}

Several studies have demonstrated the prominent role of the Lipschitz constant in the robustness of neural networks. It has been shown for example that it is related to generalization bounds of neural network classifiers \cite{tsuzuku2018, bartlett2017}. The Lipschitz constant expresses also the maximum variation of the neural network outputs and can therefore be used to derive robustness certificates when inputs are subject to random or adversarial perturbations \cite{Scaman2018LipschitzRO}. Furthermore, it has been used as a regularization term of the neural network training loss function to compute optimal neural network weights that achieve better robustness \cite{gouk2021}. Alternatively, constraints on the Lipschitz constant have been added in the training loss minimization problem to develop $1$-Lipschitz neural networks \cite{cisse2017}.         

The exact calculation of the Lipschitz constant of a neural network is a difficult problem. Even in the simple case of one hidden layer neural network, it can be shown that the problem is NP-hard~\cite{Scaman2018LipschitzRO}. Therefore, an estimation of the constant in the form of upper bounds (sometimes lower bounds) is usually sought. However, in the common case of ReLU networks, computing tight estimates of the constant is also an NP-hard problem (see Theorem 4 in \cite{Matt2020}). Several approaches have been proposed in the literature. They vary by the scope of the input being considered (global or local Lipschitz regularity), the order $p$ of $L_p$-Lipschitz regularity, or the underlying estimation method. 

Among these various approaches, Lipschitz certificates via semidefinite programming (SDP) are proposed in \cite{fazlyab2023}. By exploiting slope restriction properties of common activation functions, incremental quadratic constraints are formulated and the global $L_2$-Lipschitz constant problem is then expressed as a SDP. While it provides a nice convex formulation of the constant estimation, its application to real life neural networks architectures is limited by the computational complexity of available methods for solving SDP. Alternatively, in the case of ReLU networks, authors have considered mixed integer programming (MIP) approaches to derive exact or upper bounds of the Lipschitz constant. Indeed, in~\cite{Matt2020}, by showing that a ReLU network is a composition of MIP-encodable components and therefore itself MIP-encodable, the authors formulate the exact calculation of the local Lipschitz constant of ReLU networks as a MIP. In the worst case, MIP problems have exponential time complexity, however, in practice they are often solved in reasonable time.    

In this note, we propose new quadratically constrained MIP formulations for the neural network Lipschitz estimation problem. First, by taking into account activation regions at each layer as new constraints, we derive three new MIP formulations whose solutions give a lower bound $\underline{L}$ of the Lipschitz constant, a sequence $\{\underline{L}_\epsilon\}_{\epsilon>0}$ of lower bounds converging to $\underline{L}$ and an upper bound $\bar{L}$. We further show that $\underline{L}$ and $\bar{L}$ coincide and are equal to the true Lipschitz constant if the neural network is in general position as defined in \cite{Matt2020}. We also show that, except on a set of network parameters of Lebesgue measure $0$, $\underline{L}$ coincide with $\bar{L}$. Next, by reformulating the activation constraints as quadratic constraints, we propose equivalent Mixed Integer Quadratically Constrained Quadratic Program (MIQCQP) formulations. These new constrained problems have the benefit of reducing the search space in the branching phase involved in the MIP solving process. Furthermore, the specific quadratic structure of the objective and the constraints can also be exploited in the bounding phase of MIP solvers using quadratic convex relaxations and linearizations strategies as explained in \cite{Elloumi2019}. However, the study of the numerical solutions of these problems goes beyond the scope of this note that was only intended to explain the derivation of the MIQCQP formulation of the neural network Lipschitz estimation problem.

The note is organized in three sections. Section~\ref{prob} introduces the general problem of calculating the Lipschitz constant $L(f,\mathcal{X})$ of a neural network $f$ over an input set $\mathcal{X}$. Section~\ref{bounds} details the derivation of lower and upper bounds for $L(f,\mathcal{X})$. Finally, Section~\ref{MIQCQP} provides Mixed Integer Quadratically Constrained Quadratic Program reformulations for the problem of estimating  $L(f,\mathcal{X})$ using the bounds obtained in Section~\ref{bounds}.

\section{Problem statement}\label{prob}

We consider  ReLU  Multi-Layer-Perceptron function $f:\mathbb{R}^{n_0} \rightarrow \RR^{n_L}$, that is a composition of affine operators and element-wise \relu nonlinearities. More precisely, it may be encoded by:
$$
f(x)=T_L\circ \rho_{L-1}\circ T_{L-1}\circ \dots \circ \rho_1 \circ T_1 (x)
$$
where $T_k: \mathbb{R}^{n_{k-1}} \ni x \mapsto M_k x+ b_k\in \mathbb{R}^{n_k}$  is an affine function and $\rho_k : \mathbb{R}^{n_k} \rightarrow \mathbb{R}^{n_k}$ is the ReLU operator applied element-wise.
We denote by $\theta_k= T_k\circ \rho_{k-1}\circ T_{k-1} \circ \dots \circ \rho_1 \circ T_1$ the pre-activation output of the $k$-th layer and by $\theta_k^i$ the $i$-th component of $\theta_k$ (corresponding to the pre-activation of the $i$-th neuron of layer $k$). In the following, given an element $v\in \RR^n$, we denote its $i-th$ component by $v^i$ and we denote Hadamard product between two vectors $v_1$ and $v_2$  by $v_1 \odot v_2$.

We are interested in computing the quantity
$$\sup_{x,y\in \mathcal{X}}\frac{\|f(y)-f(x)\|}{\|y-x\|},$$
where  $\cal{X}$ is an open subset  of $\mathbb{R}^n$ and $\| . \|$ is a norm. When this quantity is finite,  we denote it by $L(f,\mathcal{X})$ and  we say that $f$ is locally Lipschitz over $\mathcal{X}.$ If $\mathcal{X}= \mathbb{R}^{n_0}$, then we denote the above quantity $L(f)$ and we simply say that $f$ is (globally) Lipschitz.

\section{Deriving lower and upper bounds of the Lipschitz constant of ReLU networks}\label{bounds}
We now derive an upper and lower bound\so{s} for $L(f,\xcal)$ by observing \cite[Theorem 1]{Matt2020} that 
\begin{equation}\label{eq:ess-sup}
  L(f,\mathcal{X})= \esup_{x\in \mathcal{X}}\sup_{G\in \mathcal{J}f(x)} \|G\|  
\end{equation} 
where $\mathcal{J}f(x)$ is the (Clarke) generalized jacobian of $f$ at $x$. 
Using recursively the Clarke jacobian Chain Rule (see \cite[Theorem 4]{Imbert2002}), we obtain the following bound 
\begin{equation}
    L(f,\mathcal{X})\leq \sup_{x\in \xcal} \left\{\|M_L\diag(g_{L-1}) M_{L-1}\cdots \diag(g_1) M_1\| \quad | \quad   g_k \in [0,1]^{n_k},\  g_k^i\in \partial \relu(\theta_k^i(x)) \right\}.
    \label{eq:ChainRule}
\end{equation}
Here $\partial \relu(x)$ is the subdifferential  of the \relu function:
$$\partial \relu (x)=\left\{
\begin{array}{ll}
\{0\} & \mbox{ if } x<0,\\
\mbox{[0,1]} & \mbox{ if } x=0,\\
\{1\} & \mbox{ if } x>0.
\end{array}
\right.
$$

An activation pattern for the \relu network $f$ is an assignment to each hidden neuron of   a sign 1 or 0:
$$(\sigma_1,\sigma_2,\dots,\sigma_{L-1}) \in \{0,1\}^{n_1} \times \{0,1\}^{n_2}\times \cdots \times \{0,1\}^{n_{L-1}}.$$
The activation region  in $\xcal$ corresponding to $(\sigma_1,\sigma_2,\dots,\sigma_{L-1})$ is
$$\mathcal{R}\left(\xcal;(\sigma_1,\sigma_2,\dots,\sigma_{L-1})\right)=\left\{x\in\xcal \quad |\quad
\left(\sigma_k^i-\frac{1}{2}\right) \theta_k^i(x)>0, \ \forall i\in\{1,\dots, n_k\},  \forall k\in \{1,\cdots,L-1\} \right\}.$$
When specifying an activation pattern, the signal assigned to a neuron $i$ from the $k-th$ layer determines whether it is on or off for inputs in the activation region  since the pre-activation of neuron is  positive (resp. negative) when $\sigma_k^i= 1$ (resp. $\sigma_k^i=0)$. 
Let us define $H_{k,i}=\{x\in \RR^{n_0} \ |\  \theta_k^i(x) =0\}$  (for $i\in\{1,\dots, n_k\}, k\in \{1,\cdots,L-1\}$) which can be thought of as “bent hyperplanes”. The non-empty activation regions is not else but the
connected components of $\xcal \setminus \cup_{k,i}H_{k,i}$ (Lemma 2 in \cite{Hanin2019}).  The Jacobian exists and is the same for all the points belonging to the same activation pattern. The Jacobian corresponding to a pattern $(\sigma_1,\cdots,\sigma_{L-1})$ is equal to $M_L\diag(\sigma_{L-1}) M_{L-1}\cdots \diag(\sigma_1) M_1$.
We obtain the following lower bound for $L(f,\xcal)$:
\begin{equation}
    L(f,\xcal)\geq \sup_{(\sigma_1,\sigma_2,\dots,\sigma_{L-1}) \in 
    \{0,1\}^{n_1+n_2+\cdots+n_{L-1}} |
\mathcal{R}(\xcal;\sigma)\neq \emptyset}
\|M_L\diag(\sigma_{L-1}) M_{L-1}\cdots \diag(\sigma_1) M_1\|.
\label{eq:borne_inf}
\end{equation}

Denoting $N(g) = \|M_L\diag(g_{L-1}) M_{L-1}\cdots \diag(g_1) M_1\|$, we are thus interested in solving the following two optimization problems to obtain upper and lower bounds of the Lipshitz constant of the ReLU neural network:
$$
(\mathcal{U}^*) \left\{\begin{array}{ll}
\max & N(g)\\
\mathrm{s.t. } \ & g_k^i \in \partial \relu(\theta_k^i(x)), \    \forall i\in\{1,\dots, n_k\}, k\in \{1,\cdots,L-1\}\\
& g_k \in [0,1]^{n_k}, \forall k\in \{1,\cdots,L-1\}\\
& x\in \xcal
\end{array}\right.
$$
and
$$
(\mathcal{L}^*) \left\{\begin{array}{ll}
\max & N(\sigma)\\
\mathrm{s.t. } \ & \mathcal{R}(\xcal;(\sigma_1,\cdots,\sigma_{L-1}))\neq\emptyset\\
& \sigma_k \in \{0,1\}^{n_k}, \forall k.
\end{array}\right.
$$

\begin{proposition}
For each $k\in \{1,\cdots,L-1\}$ and each $i\in\{1,\dots, n_k\}$,
the function $g_k^i \mapsto N(g)$ is convex.
\label{prop:per varable convexity}
\end{proposition}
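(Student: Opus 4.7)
The plan is to fix all coordinates of $g$ except $g_k^i$ and show that $N$ becomes the composition of an operator norm with an affine function of the scalar variable $g_k^i$. Since any norm is convex and precomposition with an affine map preserves convexity, this will immediately yield the claim.

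First I would write the product inside the norm by isolating the layer $k$ factor. Setting
\[
A = M_L\diag(g_{L-1})M_{L-1}\cdots\diag(g_{k+1})M_{k+1}\quad\text{and}\quad B = M_k\diag(g_{k-1})M_{k-1}\cdots\diag(g_1)M_1,
\]
(with the convention that empty products are identities in the boundary cases $k=1$ and $k=L-1$), neither $A$ nor $B$ depends on $g_k^i$, and
\[
N(g) = \|A\,\diag(g_k)\,B\|.
\]

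Next, I would decompose $\diag(g_k)=g_k^i E_{ii}+\diag(\tilde g_k)$, where $E_{ii}$ is the matrix with a single $1$ in entry $(i,i)$ and $\tilde g_k$ is obtained from $g_k$ by zeroing the $i$-th component. Then
\[
A\,\diag(g_k)\,B = g_k^i\cdot (AE_{ii}B) + A\,\diag(\tilde g_k)\,B,
\]
which is an affine function of the scalar $g_k^i$ with values in $\mathbb{R}^{n_L\times n_0}$.

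Finally I would conclude: the operator norm $\|\cdot\|$ is a norm on the space of $n_L\times n_0$ matrices, hence convex and positively homogeneous, and the composition of a convex function with an affine map is convex. Therefore $g_k^i\mapsto N(g)$ is convex. There is no real obstacle here; the only point requiring a line of care is the convention for empty products when $k\in\{1,L-1\}$, which does not affect the argument.
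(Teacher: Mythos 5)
Your argument is correct and follows essentially the same route as the paper: the paper also observes that $g_k^i \mapsto M_L\diag(g_{L-1})\cdots\diag(g_1)M_1$ is affine and composes it with the (convex) norm. Your explicit decomposition $A\,\diag(g_k)\,B = g_k^i\,(AE_{ii}B) + A\,\diag(\tilde g_k)\,B$ simply makes the affineness, which the paper asserts directly, fully explicit.
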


\begin{proof}
The function is the composition of $g_k^i \mapsto M_L\diag(g_{L-1}) M_{L-1}\cdots \diag(g_1) M_1$ which is affine and the norm $\|.\|$ which is convex, therefore it is convex \cite[Proposition $2.1.5$]{JBHU_Lem1}. 
\end{proof}

\begin{proposition}
Problem~$(\mathcal{U}^*)$ is equivalent to the following one:
$$
(\hat{\mathcal{U}}) \left\{\begin{array}{ll}
\max & N(g)\\
\mathrm{s.t. } \ & g_k^i \in \partial \relu(\theta_k^i(x)), \    \forall i\in\{1,\dots, n_k\}, k\in \{1,\cdots,L-1\}\\
& g_k \in \{0,1\}^{n_k}, \forall k\in \{1,\cdots,L-1\}\\
& x\in \xcal.
\end{array}\right.
$$
\label{prop:interv to set}
\end{proposition}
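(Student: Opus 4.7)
The plan is to show a mutual inequality between the optimal values. The inclusion $\{0,1\}^{n_k}\subset[0,1]^{n_k}$ makes every feasible point of $(\hat{\mathcal{U}})$ feasible for $(\mathcal{U}^*)$, so automatically $\sup(\hat{\mathcal{U}})\leq\sup(\mathcal{U}^*)$. All the work is in the other direction: turn an arbitrary feasible point of $(\mathcal{U}^*)$ into a feasible point of $(\hat{\mathcal{U}})$ whose objective is at least as large.

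First I would pick any feasible $(g,x)$ for $(\mathcal{U}^*)$ and partition the index set $\{(k,i) : 1\le k\le L-1,\ 1\le i\le n_k\}$ into two groups according to the sign of $\theta_k^i(x)$. If $\theta_k^i(x)\neq 0$, then $\partial\relu(\theta_k^i(x))$ is already a singleton $\{0\}$ or $\{1\}$, so $g_k^i\in\{0,1\}$ is forced. The only entries that can lie strictly inside $(0,1)$ are those with $\theta_k^i(x)=0$, for which $\partial\relu(\theta_k^i(x))=[0,1]$.

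For each such ``free'' coordinate $(k,i)$, I would invoke Proposition~\ref{prop:per varable convexity}: with $x$ and all other entries of $g$ fixed, the map $g_k^i\mapsto N(g)$ is convex on $[0,1]$, hence it attains its maximum on $[0,1]$ at one of the endpoints. Replacing $g_k^i$ by that maximizing endpoint keeps $x$ unchanged, keeps the constraint $g_k^i\in\partial\relu(\theta_k^i(x))=[0,1]$ satisfied, and does not decrease $N(g)$. Iterating this replacement over every free coordinate (the order does not matter, since each step only modifies one coordinate, and the remaining free coordinates still have their pre-activation equal to zero because $x$ is untouched) produces a feasible point $(\tilde g,x)$ of $(\hat{\mathcal{U}})$ with $N(\tilde g)\geq N(g)$.

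Taking the supremum over all feasible points of $(\mathcal{U}^*)$ then gives $\sup(\mathcal{U}^*)\leq\sup(\hat{\mathcal{U}})$, and combined with the easy direction this establishes equivalence. The only subtle point, and the one I would spell out carefully, is that the per-coordinate endpoint replacements can be performed simultaneously (or sequentially) without destroying feasibility, which works precisely because $x$ remains fixed throughout the argument, so the subdifferential constraints for the other coordinates are unaffected.
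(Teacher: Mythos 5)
Your proposal is correct and follows essentially the same route as the paper: both directions rest on the observation that fractional entries of $g$ can only occur where $\theta_k^i(x)=0$, so the per-coordinate convexity of $g_k^i\mapsto N(g)$ (Proposition~\ref{prop:per varable convexity}) lets you push each such entry to an endpoint of $[0,1]$ without leaving $\partial\relu(\theta_k^i(x))$ or decreasing the objective. The only difference is cosmetic: you argue with arbitrary feasible points and suprema, whereas the paper phrases the argument in terms of optimal solutions $g^*$ and $\hat g$, so your version is marginally more careful about attainment but otherwise identical.
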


\begin{proof}
Let $g^*$ denote a solution of~$(\mathcal{U}^*)$ and $\hat{g}$ a solution of~$(\hat{\mathcal{U}})$.

The vector $\hat{g}$ is naturally feasible for~$(\mathcal{U}^*)$ and thus 
$$N(\hat{g}) \leq N(g^*).$$

Now, if $(g^*)_k^i \in ]0,1[$ for some $i$ and $k$, using convexity of $g_k^i \mapsto N(g)$, there exists $\overline{g},$ whose all components are equal to that of $g^*$ except $(\overline{g})_k^i$ which belongs to $ \{0,1\}$, such that $N(\overline{g}) \geq N(g^*)$. Moreover $(g^*)_k^i \in ]0,1[ \cap \partial \relu(\theta_k^i(x))$ implies that $\theta_k^i(x) = 0$ and thus $(\overline{g})_k^i \in \{0,1\}$ is also in $\partial \relu(\theta_k^i(x))$. Repeating this for all components of $g^*$ that are in $]0,1[$, we show that there exists a solution $\overline{g}$ of~$(\mathcal{U}^*)$ whose components are all in $\{0,1\}$  satisfying $N(g^*) \leq N(\overline{g})$. Therefore, we have 
$$N(g^*) \leq N(\overline{g})\leq  N(\hat{g}).$$

\end{proof}

Using Proposition~\ref{prop:interv to set} and the fact that $g_k^i\in \partial \relu(\theta_k^i(x))$ is equivalent to $\left(g_k^i-\frac{1}{2}\right) \theta_k^i(x)\geq 0$ for $g_k^i \in \{0,1\}$, we compute an upper bound of the Lipschitz constant by solving 

$$(\overline{\mathcal{P}}) \left\{
\begin{array}{ll}
\max & N(g)\\
\mathrm{s.t. } \ & (g_k^i-\frac{1}{2}) \theta_k^i(x)\geq 0, \    \forall i\in\{1,\dots, n_k\}, k\in \{1,\cdots,L-1\}\\
& g_k \in \{0,1\}^{n_k}, \forall k\in \{1,\cdots,L-1\}\\
& x\in \xcal.
\end{array}
\right.
$$

By the definition of the activation region, problem ($\mathcal{L}^*$) can be reformulated as
$$(\underline{\mathcal{P}})\left\{
\begin{array}{ll}
\max & N(\sigma)\\
\mathrm{s.t. } \ & (\sigma_k^i-\frac{1}{2}) \theta_k^i(x)>0, \   \forall i\in\{1,\dots, n_k\}, k\in \{1,\cdots,L-1\}\\
& \sigma_k \in \{0,1\}^{n_k},  \forall k\in \{1,\cdots,L-1\}\\
& x\in \xcal.
\end{array}
\right.
$$
In order to avoid strict inequalities, we introduce for $\varepsilon\geq 0$, 
$$(\underline{\mathcal{P}})_\varepsilon \left\{
\begin{array}{ll}
\max & N(\sigma)\\
\mathrm{s.t. } \ & \left(\sigma_k^i-\frac{1}{2}\right) \theta_k^i(x)\geq \varepsilon, \    \forall i\in\{1,\dots, n_k\}, k\in \{1,\cdots,L-1\}\\
& \sigma_k \in \{0,1\}^{n_k}, \forall k\in \{1,\cdots,L-1\}\\
& x\in \xcal.
\end{array}
\right.
$$

Observe that $(\underline{\mathcal{P}})_0$ corrresponds to $(\overline{\mathcal{P}})$.\\

We now introduce the following constraint sets: 
\begin{eqnarray}
\Ccal
& = 
&\xcal \times \{0,1\}^{n_1+n_2+...+n_{L-1}},
\\
\Ccal_s
& = 
& \left\{
(x,\sigma)\in\Ccal : \left(\sigma_k^i-\frac{1}{2}\right) \theta_k^i(x) > 0, \    \forall i\in\{1,\dots, n_k\}, k\in \{1,\cdots,L-1\}
\right\},
\\
\Ccal_\varepsilon
& = 
& \left\{
(x,\sigma)\in\Ccal : \left(\sigma_k^i-\frac{1}{2}\right) \theta_k^i(x)\geq \varepsilon, \    \forall i\in\{1,\dots, n_k\}, k\in \{1,\cdots,L-1\}
\right\},
\end{eqnarray}
so that the preceeding problems can be rewritten as:
$$
(\overline{\mathcal{P}}) \left\{
\begin{array}{ll}
\max_{x,\sigma}
& N(\sigma)
\\
\mathrm{s.t. } 
& (x,\sigma)\in\Ccal_0
\end{array}
\right\}
\qquad
(\underline{\mathcal{P}}) \left\{
\begin{array}{ll}
\max_{x,\sigma}
& N(\sigma)
\\
\mathrm{s.t. } 
& (x,\sigma)\in\Ccal_s
\end{array}
\right\}
\qquad
(\underline{\mathcal{P}})_\varepsilon \left\{
\begin{array}{ll}
\max_{x,\sigma}
& N(\sigma)
\\
\mathrm{s.t. } 
& (x,\sigma)\in\Ccal_\varepsilon
\end{array}
\right\}.
$$

Let $\overline{L}$, $\underline{L}$, $\underline{L}_{\varepsilon}$ denote the optimal values of  $(\overline{\mathcal{P}})$, $(\underline{\mathcal{P}})$ and $(\underline{\mathcal{P}})_\varepsilon$ respectively.
The following proposition summarizes and completes the above discussion.

\begin{proposition}
We have
\begin{enumerate}
\item The function $]0,+\infty[\ni \varepsilon \mapsto \underline{L}_\varepsilon$  is non-increasing and piece-wise constant.
\item $\lim_{\varepsilon \downarrow 0}\underline{L}_{\varepsilon} \uparrow \underline{L}\leq L(f,\xcal)\leq \overline{L} =  \underline{L}_0 .$ 
\item If $\cup_{i,k}H_{i,k}$ is  Lebesgue measure negligible, then $L(f,\xcal)=\underline{L}$.
\item If the \relu network $f$ is in general position (see Definition 4 in \cite{Matt2020}), then  $L(f,\xcal)=\overline{L}=\underline{L}$.
\end{enumerate}
\end{proposition}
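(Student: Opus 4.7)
For Part~(1), monotonicity follows directly from $\Ccal_{\varepsilon_2}\subseteq\Ccal_{\varepsilon_1}$ when $\varepsilon_1\leq\varepsilon_2$. For piece-wise constancy, note that $N(\sigma)$ depends only on $\sigma\in\{0,1\}^{n_1+\cdots+n_{L-1}}$, a finite set. Setting $\alpha(\sigma)=\sup_{x\in\xcal}\min_{k,i}(\sigma_k^i-\tfrac{1}{2})\theta_k^i(x)$, pattern $\sigma$ is feasible in $(\underline{\mathcal{P}})_\varepsilon$ iff $\varepsilon\leq\alpha(\sigma)$, so $\underline{L}_\varepsilon=\max\{N(\sigma):\alpha(\sigma)\geq\varepsilon\}$ takes finitely many values with thresholds at the $\alpha(\sigma)$. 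For Part~(2), the equality $\overline{L}=\underline{L}_0$ is visible by inspection of the two problems; $L(f,\xcal)\leq\overline{L}$ follows from \eqref{eq:ChainRule} combined with Proposition~\ref{prop:interv to set}; and $\underline{L}\leq L(f,\xcal)$ is \eqref{eq:borne_inf}. For the limit, monotonicity already gives $\lim_{\varepsilon\downarrow 0}\underline{L}_\varepsilon\leq\underline{L}$, and the reverse inequality follows because any $(x,\sigma)\in\Ccal_s$ carries a positive slack $\varepsilon_0=\min_{k,i}(\sigma_k^i-\tfrac{1}{2})\theta_k^i(x)>0$, so that $(x,\sigma)\in\Ccal_{\varepsilon_0}$ and $N(\sigma)\leq\underline{L}_{\varepsilon_0}$.

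For Part~(3), I would apply the characterization \eqref{eq:ess-sup}. Since $\bigcup_{k,i}H_{k,i}$ is Lebesgue negligible, the essential supremum over $\xcal$ coincides with the one over the complement $\xcal\setminus\bigcup_{k,i}H_{k,i}$. On this open set $f$ is classically differentiable, with Jacobian entirely determined by the activation pattern; by the lemma of \cite{Hanin2019} invoked in the excerpt, the complement is the disjoint union of the non-empty activation regions, on each of which the Jacobian is the constant matrix $M_L\diag(\sigma_{L-1})\cdots\diag(\sigma_1)M_1$. The essential supremum is therefore the finite maximum over $\sigma$ with $\mathcal{R}(\xcal;\sigma)\neq\emptyset$ of the corresponding operator norm, which is exactly $\underline{L}$.

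For Part~(4), general position should first imply that each $H_{k,i}$ is Lebesgue negligible (the affine function $\theta_k^i$ is non-constant on every full-dimensional activation region), hence so is $\bigcup_{k,i}H_{k,i}$; Part~(3) then gives $L(f,\xcal)=\underline{L}$. It remains to prove $\overline{L}=\underline{L}$. The plan is to show that under general position any pattern $\sigma^*$ feasible for the weak-inequality problem $(\overline{\mathcal{P}})$ must satisfy $\mathcal{R}(\xcal;\sigma^*)\neq\emptyset$: starting from a feasible $x^*$, transversality of the bent hyperplanes ensured by Definition~4 of \cite{Matt2020} lets one pick a direction $d$ such that $x^*+\delta d$ strictly satisfies every inequality for small $\delta>0$. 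The attainable objective values of $(\overline{\mathcal{P}})$ and $(\underline{\mathcal{P}})$ then coincide, giving $\overline{L}=\underline{L}$.

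\textbf{Main obstacle.} The delicate step is the perturbation argument in Part~(4). It requires unpacking Definition~4 of \cite{Matt2020} into a transversality statement guaranteeing that when several bent hyperplanes meet at a common feasible point $x^*$, one can simultaneously flip every active $\theta_k^i(x^*)=0$ onto its correct strict side by a single infinitesimal displacement. Without general position this can fail: a weakly feasible pattern may be realised only on the lower-dimensional intersection of bent hyperplanes, so that $\overline{L}$ is attained by a ``phantom'' activation pattern with empty region and may strictly exceed $\underline{L}$.
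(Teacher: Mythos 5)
Your treatment of Parts (1)--(3) is correct and follows essentially the same route as the paper: monotonicity from the inclusion $\Ccal_{\varepsilon_2}\subseteq\Ccal_{\varepsilon_1}$, piece-wise constancy from the finiteness of $\{N(\sigma)\}$ (your threshold function $\alpha(\sigma)$ is a harmless refinement, though note the sup defining it need not be attained, so feasibility at $\varepsilon=\alpha(\sigma)$ is not guaranteed; the finite-range argument is what actually does the work), the chain of inequalities in (2) with the positive-slack argument for the limit, and for (3) the decomposition $\xcal=(\bigcup_\sigma\mathcal{R}(\xcal;\sigma))\cup(\bigcup_{i,k}H_{i,k}\cap\xcal)$ combined with \eqref{eq:ess-sup}, exactly as in the paper.

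Part (4), however, has a genuine gap, and it is the one you yourself flag. The paper does not run a perturbation/transversality argument at all: it invokes Theorem 2 of \cite{Matt2020}, which states that for a network in general position the Clarke generalized Jacobian $\mathcal{J}f(x)$ is \emph{exactly} the set $\{M_L\diag(g_{L-1})\cdots\diag(g_1)M_1\ :\ g_k^i\in\partial\relu(\theta_k^i(x))\}$; plugging this into \eqref{eq:ess-sup} yields $L(f,\xcal)=\overline{L}$ directly, and general position also makes $\bigcup_{i,k}H_{i,k}$ Lebesgue negligible (again by \cite{Matt2020}), so item (3) gives $L(f,\xcal)=\underline{L}$, whence $\overline{L}=\underline{L}$. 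Your alternative plan --- showing that under general position every pattern feasible for $(\overline{\mathcal{P}})$ has a nonempty activation region, via a single displacement flipping all active constraints $\theta_k^i(x^*)=0$ to their strict sides --- is precisely the hard geometric content hiding behind that cited theorem, and you leave it as an ``obstacle'' rather than proving it. Note also that the bent hyperplanes are level sets of \emph{piecewise}-affine functions whose shape in deeper layers depends on the activations of earlier layers, so the simultaneous-flip claim is not a routine linear-algebra perturbation; as written, $\overline{L}=\underline{L}$ (and hence the full conclusion of (4)) is not established. A smaller slip in the same part: your parenthetical justification that each $H_{k,i}$ is negligible because ``the affine function $\theta_k^i$ is non-constant'' is inaccurate for $k\geq 2$ ($\theta_k^i$ is only piecewise affine); negligibility should instead be taken from the general-position property itself, as the paper does.
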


\begin{proof}
\begin{enumerate}
    \item If $\varepsilon_1\leq \varepsilon_2$ then $\Ccal_{\varepsilon_2}\subset \Ccal_{\varepsilon_1}$ which implies that $\underline{L}_\varepsilon$ is non increasing. Moreover, $\underline{L}_\varepsilon$ belongs to $\{N(\sigma), \sigma \in \{0,1\}^{n_1+n_2+...+n_{L-1}}\}$ which is a finite set. Hence $\underline{L}_\varepsilon$ is piece-wise constant.
    \item Observe that $\underline{L}_{\varepsilon} \leq \underline{L}\leq L(f,\xcal)\leq \overline{L} =  \underline{L}_0.$ To prove the remaining statement, let $\hat{\sigma}$ an optimal solution of $(\underline{\mathcal{P}})$. Then $\mathcal{R}(\xcal;\hat{\sigma})\neq \emptyset$, that is there exists $\hat{x}\in\xcal$ such that $\left(\hat{\sigma}_k^i-\frac{1}{2}\right) \theta_k^i(\hat{x})>0, \ \forall i\in\{1,\dots, n_k\},  \forall k\in \{1,\cdots,L-1\}.$ Define 
    $$\hat{\varepsilon}=\min_{i\in\{1,\dots, n_k\}, k\in \{1,\cdots,L-1\}}\left(\hat{\sigma}_k^i-\frac{1}{2}\right) \theta_k^i(\hat{x}).$$ 
    So $(\hat{x},\hat{\sigma}) \in \Ccal_\varepsilon$ for all $\varepsilon\leq \hat{\varepsilon}$ and thus $\underline{L}_\varepsilon \geq \underline{L}.$
    
    \item We can write $\xcal = (\bigcup_{\sigma} \mathcal{R}(\xcal;\sigma))\cup (\bigcup_{i,k} H_{i,k}\cap \xcal).$ Now (\ref{eq:ess-sup}) implies
    \begin{eqnarray*}
    L(f,\xcal)&=& \esup_{x\in  (\bigcup_{\sigma} \mathcal{R}(\xcal;\sigma))}\sup_{G\in \mathcal{J}f(x)} \|G\|  \\
     &=& \sup_{\sigma|\mathcal{R}(\xcal,\sigma)\neq \emptyset} N(\sigma)\\
     &=& \underline{L}.
    \end{eqnarray*}
    \item By \cite[Theorem 2]{Matt2020} if the \relu network is in general position then 
    $$\mathcal{J}f(x)= \left\{\|M_L\diag(g_{L-1}) M_{L-1}\cdots \diag(g_1) M_1\| \quad | \quad   g_k \in [0,1]^{n_k},\  g_k^i\in \partial \relu(\theta_k^i(x)) \right\}.$$   By (\ref{eq:ess-sup}) we obtain $L(f,\xcal)=\overline{L}.$ Furthermore, if the \relu network $f$ is in a general position then $\cup_{i,k}H_{i,k}$ is Lebesgue measure negligible \cite{Matt2020}, which ensures the second equality using the preceding item.
\end{enumerate}

\end{proof}

\begin{remark}
  By \cite[Theorem 3]{Matt2020} the set of \relu networks not in general position has Lebesgue measure zero over the
parameter space and consequently for almost all \relu  Networks we have $L(f,\xcal)=\overline{L}=\underline{L}$.
\end{remark}

\begin{corollary}
 If $\xcal =\mathbb{R}^{n_0}$ and the bias $b_k$, $k\in\{1,\cdots, L-1\}$ are zero, then $\underline{L}_\varepsilon=\underline{L}$ for all $\varepsilon>0$.
\end{corollary}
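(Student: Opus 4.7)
The strategy is to exploit positive homogeneity. When $\mathcal{X}=\mathbb{R}^{n_0}$ and $b_1=\cdots=b_{L-1}=0$, each pre-activation map $\theta_k$ for $k\in\{1,\dots,L-1\}$ is positively homogeneous of degree one in $x$, i.e.\ $\theta_k(\lambda x)=\lambda\,\theta_k(x)$ for every $\lambda>0$. I would check this by a short induction on $k$: the base case $\theta_1(x)=M_1 x$ is linear, and in the induction step
\[
\theta_{k+1}(\lambda x)=M_{k+1}\rho_k(\theta_k(\lambda x))=M_{k+1}\rho_k(\lambda\theta_k(x))=\lambda M_{k+1}\rho_k(\theta_k(x))=\lambda\theta_{k+1}(x),
\]
using the inductive hypothesis together with the positive homogeneity of ReLU. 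The zero-bias hypothesis is essential at every step; a nonzero $b_{k+1}$ would destroy homogeneity.

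Next I would use this to rescale an optimal solution of $(\underline{\mathcal{P}})$ into the tighter feasible set $\mathcal{C}_\varepsilon$. Such an optimum $(\hat{x},\hat{\sigma})$ exists because the objective values $\{N(\sigma):\sigma\in\{0,1\}^{n_1+\cdots+n_{L-1}}\}$ form a finite set, and feasibility of $(\hat{x},\hat{\sigma})$ in $\mathcal{C}_s$ provides $\hat{\delta}:=\min_{k,i}(\hat{\sigma}_k^i-\tfrac12)\theta_k^i(\hat{x})>0$. For any prescribed $\varepsilon>0$, setting $\lambda:=\varepsilon/\hat{\delta}$ and applying homogeneity yields
\[
(\hat{\sigma}_k^i-\tfrac12)\theta_k^i(\lambda\hat{x})=\lambda(\hat{\sigma}_k^i-\tfrac12)\theta_k^i(\hat{x})\geq\lambda\hat{\delta}=\varepsilon
\]
for every $k,i$, so $(\lambda\hat{x},\hat{\sigma})\in\mathcal{C}_\varepsilon$. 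Since $N$ depends only on $\sigma$, this feasible point of $(\underline{\mathcal{P}})_\varepsilon$ attains $N(\hat{\sigma})=\underline{L}$, hence $\underline{L}_\varepsilon\geq\underline{L}$. The reverse inequality is recorded in part~2 of the preceding proposition, so the two are equal.

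I do not expect any real obstacle: the positive-homogeneity observation does essentially all the work, and once it is in place the scaling $\hat{x}\mapsto\lambda\hat{x}$ immediately produces the desired feasible point with the same objective. The one subtlety worth flagging is that the scaled witness must remain in the same activation pattern $\hat{\sigma}$, which is guaranteed precisely because positive scaling preserves the sign of each $\theta_k^i(\hat{x})$; this is also the reason the argument collapses as soon as a single hidden bias is allowed to be nonzero.
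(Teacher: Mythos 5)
Your proposal is correct and follows essentially the same route as the paper: rescale an optimal witness $(\hat{x},\hat{\sigma})$ of $(\underline{\mathcal{P}})$ by $\lambda>0$, using the positive homogeneity $\theta_k(\lambda\hat{x})=\lambda\theta_k(\hat{x})$ granted by the zero biases, so that $(\lambda\hat{x},\hat{\sigma})\in\Ccal_\varepsilon$ and $\underline{L}_\varepsilon\geq N(\hat{\sigma})=\underline{L}$, the reverse inequality being already known. The only differences are cosmetic: you make the induction behind the homogeneity and the explicit choice $\lambda=\varepsilon/\hat{\delta}$ precise, where the paper simply takes $\lambda$ sufficiently large.
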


\begin{proof}

We have already established that $\underline{L}_\varepsilon\leq \underline{L}.$ Let $(\hat{x},\hat{\sigma})\in \Ccal_s$ an optimal solution of $(\underline{\mathcal{P}})$ and let $\varepsilon>0.$ Since the bias  $b_k$, $k\in\{1,\cdots, L-1\}$ are zero then for any $\lambda>0$ we have $\theta_k(\lambda \hat{x})=\lambda \theta_k(\hat{x}).$ Therefore we can choose $\lambda$ sufficiently large so that 
$
\left(\hat{\sigma}_k^i-\frac{1}{2}\right) \theta_k^i(\lambda\hat{x}) > \varepsilon, \    \forall i\in\{1,\dots, n_k\}, k\in \{1,\cdots,L-1\}
$
ensuring that $(\lambda \hat{x},\hat{\sigma}) \in  \Ccal_\varepsilon$ and consequently 
$\underline{L}_\varepsilon\geq N(\hat{\sigma})= \underline{L}.$
\end{proof}
The following simple examples illustrate the above results. 
\begin{example}
For $f(x)= x-1$, we trivially have $L(f,\xcal) = 1$. This function can also be written $f(x) = \max(x-1,0)-\max(1-x,0)$,  corresponding to our formalism with $$M_1=\begin{pmatrix}
1\\
-1
\end{pmatrix},  \ M_2=\begin{pmatrix}
1 & -1
\end{pmatrix},\ b_1=\begin{pmatrix}
-1\\
1
\end{pmatrix}, \ b_2=0.
$$
With this choice, we have $\underline{L}_\varepsilon=\underline{L}=1$ for all $\varepsilon>0$ 
 and 
$\underline{L}_0=\overline{L}=2$.  
Indeed, $x = 1$ is neither in $\mathcal{R}(\mathbb{R};\sigma)$ nor in the feasible set of $(\underline{\mathcal{P}})_\varepsilon$ for $\varepsilon > 0$, but it belongs to $\Ccal_0$.

\centerline{
\begin{tikzpicture}[xscale=1]
\clip (-0.5,-0.5) rectangle(5,3.5) ;

\draw[black,->] (0,0)--(4.5,0) ;
\draw (4.5,0) node[above] {$\varepsilon$} ;

\draw[black,->] (0,0)--(0,3) ;
\draw (0.3,3) node {$\underline{L}_\varepsilon$} ;

\draw (-0.2,2) node{$2$} ;
\draw[black] (0,2) node{$\bullet$} ;

\draw (-0.2,1) node {$1$} ;
\draw[black,very thick] (0.1,0.8) -- (0.1,1.2) ;
\draw[black,very thick] (-0.1,0.8) -- (0.1,0.8) ;
\draw[black,very thick] (-0.1,1.2) -- (0.1,1.2) ;
\draw[black,very thick] (0.1,1)--(4.5,1) ;
\end{tikzpicture}
}

\end{example}
\begin{example}
For $f(x)=\max(x+1,0)-\max(x-1,0)$ corresponding to $$M_1=\begin{pmatrix}
1\\
1
\end{pmatrix},  \ M_2=\begin{pmatrix}
-1 & 1
\end{pmatrix},\ b_1=\begin{pmatrix}
-1\\
1
\end{pmatrix}, \ b_2=0,
$$
we have $\underline{L}=1$, 
$L(f,\xcal) = 1$ and 
$\underline{L}_0=\overline{L}=1$. 
Moreover, 
$$
\underline{L}_\varepsilon= \left\{\begin{array}{ll}
1 &  \text{ for all } 0<\varepsilon\leq 0.5,
\\
0 & \text{ for all } \varepsilon >0.5.
\end{array}\right.
$$

\centerline{
\begin{tikzpicture}[xscale=1]
\clip (-0.5,-0.5) rectangle(5,2.5) ;

\draw[black,->] (0,0)--(4.5,0) ;
\draw (4.5,0) node[above] {$\varepsilon$} ;

\draw[black,->] (0,0)--(0,2) ;
\draw (0.3,2) node {$\underline{L}_\varepsilon$} ;

\draw (-0.2,1) node{$1$} ;
\draw[black] (0,1) node{$\bullet$} ;
\draw[black,very thick] (0.0,1.0) -- (0.5,1) ;
\draw[black] (0.5,1) node{$\bullet$} ;

\draw[black,very thick] (0.6,-0.2) -- (0.6,0.2) ;
\draw[black,very thick] (0.4,-0.2) -- (0.6,-0.2) ;
\draw[black,very thick] (0.4,0.2) -- (0.6,0.2) ;
\draw[black,very thick] (0.6,0)--(4.5,0) ;

\draw[black] (0.5,-0.1)--(0.5,0.1) ;
\draw (0.5,-0.4) node {$0.5$} ;
\end{tikzpicture}
}
\end{example}

\section{MIQCQP reformulations}\label{MIQCQP}


Let now consider the $L^p$-norm for $p\in\{1,2,\infty\}$ and assume that $\xcal$  can be expressed by quadratic constraints (e.g. a ball) or linear constraints (e.g. polyhedron).

Then let us remark that 
we have $\rho_k \circ \theta_k(x)= \sigma_{k}\odot \theta_{k}(x)$ for all $k=1,2,\cdots {L-1}$ as soon as $(x, \sigma)$ is in one of the sets $\Ccal_\varepsilon$, $\Ccal_s$ or $\Ccal_0$. By definition of $\theta_k(x)$, we have  thus 
$\rho_k \circ \theta_k(x)= \sigma_{k}\odot \theta_{k}(x)= \sigma_k \odot (M_k\circ\rho_{k-1}\circ \theta_{k-1}(x) +b_{k}) $ for $k=2,3,\cdots {L-1}$ and  $\rho_1 \circ \theta_1(x)= \sigma_{1} \odot (M_1 x +b_1).$  Denoting $x_k=\rho_k \circ \theta_k(x)$ we get the following bilinear relations
$$
x_k= \sigma_k\odot (M_kx_{k-1}+b_{k}) \mbox{ for } k=1,2,\cdots {L-1} \mbox{ with } x_0= x.
$$
Therefore, the constraints of $(\underline{\mathcal{P}})_\varepsilon$ are equivalent to
$$
x_0\in \xcal \text{ and for all } k\in \{1,2,\dots, {L-1}\}, 
\left\{\begin{array}{l}
x_k= \sigma_k\odot (M_kx_{k-1}+b_{k}),
\\
(\sigma_k-\frac{1}{2}) \odot (M_k x_{k-1} +b_k) \geq \varepsilon,
\\ 
\sigma_k \in \{0,1\}^{n_k}.
\end{array}\right.
$$

The objective function of $(\underline{\mathcal{P}})_\varepsilon$ can also be expressed, depending on the chosen $L^p$-norm, as
\begin{equation*}\label{norme-def}
N_p(\sigma) = \|M_L\diag(\sigma_{L-1}) M_{L-1}\cdots \diag(\sigma_1) M_1\|_p=\sup_{y, \|y\|_p\leq 1} \|M_L\diag(\sigma_{L-1}) M_{L-1}\cdots \diag(\sigma_1) M_1y\|_p
\end{equation*}
 and equivalently as
 $$
 \begin{array}{ll}
 \displaystyle\max_{y} & \|y_{L}\|_p\\
 \mathrm{s.t. }    & y_k= M_k \diag(\sigma_{k-1}) y_{k-1}, \forall k\in \{2,\dots, L\}, \\
                   & y_1 =M_1 y_0\\
                   & \|y_0\|_p\leq 1
 \end{array}
 $$
where $y$ is the collection of $y_0, y_1, \dots, y_L$.
 

Finally, for a given $p\in\{1,2,\infty\}$, $(\underline{\mathcal{P}})_\varepsilon$ is equivalent to the following problem:
 $$(\underline{\mathcal{P}}_p)_\varepsilon 
 \left\{
\begin{array}{ll}
\displaystyle\max_{y, \sigma, x} & \|y_{L}\|_p\\
\mathrm{s.t. }\ & y_k= M_k \diag(\sigma_{k-1}) y_{k-1}, \forall k\in \{2,\dots, L \}, \\
                 & y_1 =M_1 y_0\\
                   & x_k= \sigma_k\odot (M_kx_{k-1}+b_{k}),  k=1,2,\cdots {L-1}\\
                   & (\sigma_k-\frac{1}{2}) \odot (M_k x_{k-1} +b_k) \geq \varepsilon, \   k=1,2,\cdots {L-1}\\
                   & \sigma_k \in \{0,1\}^{n_k}, k=1,2,\cdots {L-1}\\
                   & x_0\in \xcal\\
                   & \|y_0\|_p\leq 1
\end{array}
\right.
$$
where $\sigma$ is the collection of $\sigma_1, \dots, \sigma_{L-1}$ and $x$ is the collection of $x_0, x_1, \dots, x_{L-1}$. Provided that $\xcal$ is expressible by quadratic (or linear) constraints, the above problem can also be expressed as a MIQCQP as shown below.

\paragraph{The case $p=2$}
In this case, $(\underline{\mathcal{P}}_2)_\varepsilon$ is trivially equivalent to the following MIQCQP problem.
 $$
\begin{array}{ll}
\displaystyle\max_{y, \sigma, x} & \|y_{L}\|_2^2\\
\mathrm{s.t. }\ & y_k= M_k \diag(\sigma_{k-1}) y_{k-1}, \forall k\in \{2,\dots, L \}, \\
                   & y_1 =M_1 y_0\\
                   & x_k= \sigma_k\odot (M_kx_{k-1}+b_{k}),  k=1,2,\cdots {L-1}\\
                   & (\sigma_k-\frac{1}{2}) \odot (M_k x_{k-1} +b_k) \geq \varepsilon, \   k=1,2,\cdots {L-1}\\
                   & \sigma_k \in \{0,1\}^{n_k}, k=1,2,\cdots {L-1}\\
                   & x_0\in \xcal\\
                   & \|y_0\|_2^2\leq 1.\\
\end{array}
$$
\paragraph{The case $p=1$}
Observing that for all $a\in\mathbb{R}$, $|a|=(2\lambda-1)a$ with $\lambda\in\{0,1\}$ and $(2\lambda-1)a\geq0$, $(\underline{\mathcal{P}}_1)_\varepsilon$ is equivalent to
 $$
\begin{array}{ll}
\displaystyle\max_{y, \sigma, x,\nu,\mu} &\sum_{i=1}^{n_L} (2\mu_i-1)(y_L)_i\\
\mathrm{s.t. }\ & y_k= M_k \diag(\sigma_{k-1}) y_{k-1}, \forall k\in \{2,\dots, L \}, \\
                   & y_1 =M_1 y_0\\
                   & x_k= \sigma_k\odot (M_kx_{k-1}+b_{k}),  k=1,2,\cdots {L-1}\\
                   & (\sigma_k-\frac{1}{2}) \odot (M_k x_{k-1} +b_k) \geq \varepsilon, \   k=1,2,\cdots {L-1}\\
                   & \sigma_k \in \{0,1\}^{n_k}, k=1,2,\cdots {L-1}\\
                   & x_0\in \xcal\\[0.2cm]
                   & \nu\in \{0,1\}^{n_0}\\
                   &(2 \nu_i-1) (y_0)_i \geq 0,\ i=1,2,\cdots,n_0\\
                   &\sum_{i=1}^{n_0} (2\nu_i-1)(y_0)_i\leq 1, i=1,2,\cdots,n_0\\[0.2cm]
                   & \mu \in \{0,1\}^{n_L}\\
                   &(2 \mu_i-1) (y_L)_i \geq 0,\ i=1,2,\cdots,n_L.
\end{array}
$$
The last constraints can be further linearized by introducing additional binary variables and using a \textit{big-M} technique. Indeed, for all $x\in \mathbb{R}$, with $|x|\leq B$, $u=|x|$  if and only if there exists $\lambda \in \{0,1\}$ such that 
 $$
 \left\{
\begin{array}{l}
u\geq x \mbox{ and } u\geq -x,\\
x \leq B(1-\lambda)\mbox{ and } x \geq - B\lambda,\\
u \leq -x +  2B(1-\lambda) \mbox{ and } u\leq x+ 2B\lambda.
\end{array}
\right.
$$
The first constraints imply that $u\geq |x|.$ The second ones ensure that if $x>0$ (respectively $x<0$)  then $\lambda=0$ (respectively $\lambda=1$). The last constraints guarantee that $u\leq |x|.$  Problem $(\underline{\mathcal{P}}_1)_\varepsilon$ is hence equivalent to

$$
\begin{array}{ll}
\displaystyle\max_{y, \sigma, x,u,w,\nu,\mu} &\sum_{i=1}^{n_L} w_i\\
\mathrm{s.t. }\ & y_k= M_k \diag(\sigma_{k-1}) y_{k-1}, \forall k\in \{2,\dots, L \}, \\
                   & y_1 =M_1 y_0\\
                   & x_k= \sigma_k\odot (M_k x_{k-1}+b_{k}),  k=1,2,\cdots {L-1}\\
                   & (\sigma_k-\frac{1}{2}) \odot (M_k x_{k-1} +b_k) \geq \varepsilon, \   k=1,2,\cdots {L-1}\\
                   & \sigma_k \in \{0,1\}^{n_k}, k=1,2,\cdots {L-1}\\
                   & x_0\in \xcal\\[0.2cm]
                   & \nu\in \{0,1\}^{n_0}\\
                   & -1\leq y_0 \leq 1\\
                   & (y_0)_i\leq u_i,\ i =1,2,\cdots,n_0\\
                   & -(y_0)_i\leq u_i, \ i=1,2,\cdots,n_0\\
                   & (y_0)_i \leq 1-\nu_i, i =1,2,\cdots,n_0\\
                   & (y_0)_i\geq -\nu_i, \ i =1,2,\cdots,n_0\\
                   & u_i\leq -(y_0)_i +  2(1-\nu_i), \ i=1,2,\cdots,n_0\\
                   & u_i\leq (y_0)_i +  2\nu_i, i \  =1,2,\cdots,n_0\\
                   & \sum_{i=1}^{n_0} u_i\leq 1\\[0.2cm]
                   & \mu \in \{0,1\}^{n_L}\\
                   & (y_L)_i\leq w_i,\ i =1,2,\cdots,n_L\\
                   & -(y_L)_i\leq w_i, \ i=1,2,\cdots,n_L\\
                   & (y_L)_i\leq C(1-\mu_i), i =1,2,\cdots,n_L\\
                   & (y_L)_i\geq -C\mu_i, \ i =1,2,\cdots,n_L\\
                   & w_i\leq -(y_L)_i + 2C (1-\mu_i), \ i=1,2,\cdots,n_L\\
                   & w_i\leq (y_L)_i + 2C \mu_i, i =1,2,\cdots,n_L\\
\end{array}
$$
where $C\gg0$ is the so-called \textit{big-M} constant.

\paragraph{The case $p=\infty$}
Problem $(\underline{\mathcal{P}}_\infty)_\varepsilon$ can similarly be expressed as
 $$
\begin{array}{ll}
\displaystyle\max_{y, \sigma, x,u,\mu,\eta} &\sum_{i=1}^{n_L} \eta_i u_i \\
\mathrm{s.t. }\ & y_k= M_k \diag(\sigma_{k-1}) y_{k-1}, \forall k\in \{2,\dots, L \}, \\
                   & y_1 =M_1 y_0\\
                   & x_k= \sigma_k\odot (M_kx_{k-1}+b_{k}),  k=1,2,\cdots {L-1}\\
                   & (\sigma_k-\frac{1}{2}) \odot (M_k x_{k-1} +b_k) \geq \varepsilon, \   k=1,2,\cdots {L-1}\\
                   & \sigma_k \in \{0,1\}^{n_k}, k=1,2,\cdots {L-1}\\
                   & x_0\in \xcal\\
                   & -1\leq y_0\leq 1\\[0.2cm]               
                   & \mu \in \{0,1\}^{n_L}\\
                   & u\geq 0\\
                   & u_i=(2\mu_i-1)(y_L)_i,\ i=1,2,\cdots,n_L\\
                   & \eta \in \{0,1\}^{n_L}\\
                   & \sum_{i=1}^{n_L} \eta_i=1.
\end{array}
$$
Note: in this formulation the last constraints can also be linearized as in the case $p=1$.

\bibliographystyle{plain}
\bibliography{references}
\end{document}